\newtheorem{theorem}{Theorem}[section]
\newtheorem{lemma}[theorem]{Lemma}
\newtheorem{definition}[theorem]{Definition}
\newtheorem{remark}{Remark}
\journal{}
\begin{document}
\begin{spacing}{1.15}
\begin{CJK*}{GBK}{song}
\begin{frontmatter}
\title{\textbf{An Erd\H{o}s-Stone type result for high-order spectra of graphs}}

\author[label1]{Chunmeng Liu}\ead{liuchunmeng0214@126.com}
\author[label1]{Jiang Zhou\corref{cor}}\ead{zhoujiang@hrbeu.edu.cn}
\author[label1]{Changjiang Bu}\ead{buchangjiang@hrbeu.edu.cn}
\cortext[cor]{Corresponding author}

\address{
\address[label1]{College of  Mathematical Sciences, Harbin Engineering University, Harbin 150001, PR China}
}

\begin{abstract}
Erd\H{o}s-Stone Theorem is a well-known result in extremal graph theory which determines the asymptotic behaviour of maximum number of edges in an $n$-vertex $H$-free graph. 
In 2009, Nikiforov gave a spectral version of Erd\H{o}s-Stone Theorem. 
In this paper, we obtain a tensor's spectral version of Erd\H{o}s-Stone Theorem. 
\end{abstract}

\begin{keyword}
Spectral radius; Tensor; Clique; Erd\H{o}s-Stone Theorem
\\
\emph{AMS classification (2020):} 05C50, 05C35 
\end{keyword}
\end{frontmatter}

\section{Introduction}

The graphs in this paper are undirected and simple.
Let $G$ be a graph with the set of vertices $V(G)=\{1,2,...,n\}$.
If an induced subgraph of a subset of $V(G)$ is a complete graph, then the subset is called a clique. 
A clique is called a $t$-clique if it has $t$ elements. 
Let $C_{t}(G)$ denote the set of $t$-cliques in $G$. 
Denote by $c_{t}(G)$ the number of elements in $C_{t}(G)$.
A complete $s$-partite graph with the absolute value of the difference between the number of vertices in any two parts is at most $1$ is denote by $T_{s}(n)$, where $n$ is its number of vertices. 
And $T_{s}(n)$ is called the $s$-partite Tur\'{a}n graph.

For a graph $H$ with no isolated vertices, denote by $ex(n,H)$ the maximum number of edges in an $n$-vertex $H$-free graph. 
A celebrated result in extremal graph theory is called Erd\H{o}s-Stone Theorem \cite{Stone} (see also \cite{Simonovits}). 
\begin{theorem}\textup{\cite{Stone}} \label{thm stone-sim.}
Let $H$ be a graph with chromatic number $\chi(H)=r+1$. 
For an arbitrary positive number $\epsilon$, there is a positive number $n_{0}(\epsilon, H)$ such that for $n > n_{0}(\epsilon, H)$, 
\begin{align*}
\left(1-\frac{1}{r}-\epsilon\right)\frac{n^2}{2}\leq ex(n,H)\leq\left(1-\frac{1}{r}+\epsilon\right)\frac{n^2}{2}.
\end{align*}
\end{theorem}

For two graphs $T$ and $H$ with no isolated vertices, denote by $ex(n,T,H)$ the maximum number of copies of $T$ in an $n$-vertex $H$-free graph. 
For more results on determining $ex(n,T,H)$, one can refer to \cite{,Alon,Luo,Ergemlidze,Gerbner,Ma}.
An edge is a complete graph with two vertices. 
Alon and Shikhelman \cite{Alon} extended Theorem \ref{thm stone-sim.} to $K_{t}$ and obtained the following conclusion.
\begin{theorem}\textup{\cite{Alon}} \label{cor_Alon}
Let $H$ be a graph with chromatic number $\chi(H)=r+1$ and $r+1>t$. 
For an arbitrary positive number $\epsilon$, there is a positive number $n_{0}(\epsilon, H)$ such that for $n > n_{0}(\epsilon, H)$, 
\begin{align*}
\left(\binom{r}{t}\left(\frac{1}{r}\right)^{t}-\epsilon\right)n^{t}\leq ex(n,K_{t},H)\leq\left(\binom{r}{t}\left(\frac{1}{r}\right)^{t}+\epsilon\right)n^{t}.
\end{align*}
\end{theorem}

Let $ex_{\mu}(n,H)$ be the largest spectral radius of the adjacency matrix of an $H$-free $n$-vertex graph. 
There have been many results on determining $ex_{\mu}(n,H)$ (see \cite{Nikiforov_sur,Li,Tait,Zhai,Kang,Nikiforov}). 
A conclusion in \cite{Nikiforov} can derive the adjacency matrix's spectral version of Erd\H{o}s-Stone Theorem.
\begin{theorem}\textup{\cite{Nikiforov}} \label{thm Niki.}
Let $H$ be a graph with chromatic number $\chi(H)=r+1$. 
For an arbitrary positive number $\epsilon$, there is a positive number $n_{0}(\epsilon, H)$ such that for $n > n_{0}(\epsilon, H)$, 
\begin{align*}
\left(1-\frac{1}{r}-\epsilon\right)n\leq ex_{\mu}(n,H)\leq\left(1-\frac{1}{r}+\epsilon\right)n.
\end{align*}
\end{theorem}
\begin{remark}
For a graph $G$, from the inequality $\frac{2c_{2}(G)}{n}\leq\mu(G)$, where $c_{2}(G)$ is the number of edges of $G$, it is easy to see that Theorem \ref{thm Niki.} implies Theorem \ref{thm stone-sim.}. (see \cite{Nikiforov})
\end{remark}
 
In 1980, Cvetkovi$\acute{\mathrm{c}}$, Doob and Sachs \cite{Book1} posed the following high order spectrum of a graph $G$.
For a vertex $i\in V(G)$, the neighborhood of $i$ is the set of all vertices adjacent to $i$, denoted by $N(i)$. 
If there exists a complex number $\lambda$ and a nonzero complex vector $x=(x_{1},\ldots,x_{n})^{T}$ such that
\begin{align*}
\lambda x_{i}^{2}=
\begin{cases}
\sum\limits_{\substack{\{i_{2},i_{3}\}\subseteq N(i),\\ 
i_{2}\neq i_{3}}} x_{i_{2}}x_{i_{3}},& |N(i)|\geq 2,\\
0,& |N(i)|<2,
\end{cases} \ (i=1,2,\cdots,n)
\end{align*}
then $\lambda$ is called a quadratic eigenvalue of $G$ (see \cite{Book1}). 
In 2005, Qi \cite{Qi} and Lim \cite{Lim} posed the concept of eigenvalues of tensors. 
In fact, the quadratic eigenvalues of $G$ are the eigenvalues of special third-order tensors (the tensors and the eigenvalues of tensors will be introduced in Section 2).
In \cite{Liu_2}, the authors extended the quadratic spectrum of $G$ to a higher order and rewrote the high order spectrum of a graph $G$ to the spectrum of a tensor that comes from $G$.
Furthermore, the authors \cite{Liu} proposed another type of high order spectra of a graph through a clique tensor of a graph, and the spectral Mantel's Theorem is extended.
In this paper, we focus on spectra of the clique tensor. 

\begin{definition}\textup{\cite{Liu}}\label{defi cliques-tensor}
Let $G$ be a graph with $n$ vertices. An order $t$ dimension $n$ tensor $\mathcal{A}(G)=(a_{i_{1}i_{2}\cdots i_{t}})$ is called the $t$-clique tensor of $G$, where
\begin{align*}
a_{i_{1}i_{2}\cdots i_{t}}=
\begin{cases}
   \frac{1}{(t-1)!}, &\{i_{1},\ldots,i_{t}\}\in C_{t}(G).\\
   0, &\textup{otherwise}.
\end{cases}
\end{align*}
\end{definition}
The spectral radius of $\mathcal{A}(G)$ is called the $t$-clique spectral radius of $G$, denoted by $\mu^{(t)}(G)$. 
We write $ex_{\mu^{(t)}}(n,H)$ for the largest $t$-clique spectral radius of an $n$-vertex $H$-free graph. 
We have the following conclusion.
\begin{theorem} \label{thm main}
Let $H$ be a graph with chromatic number $\chi(H)=r+1$. 
For an arbitrary positive number $\epsilon$, there is a positive number $n_{0}(\epsilon, H)$ such that for $n > n_{0}(\epsilon, H)$, 
\begin{align*}
\left(\frac{1}{r^{r-1}}-\epsilon\right)n^{r-1}\leq ex_{\mu^{(r)}}(n,H)\leq\left(\frac{1}{r^{r-1}}+\epsilon\right)n^{r-1}.
\end{align*}
\end{theorem}

For Theorem \ref{cor_Alon}, when $t=r$, we have
\begin{align}\label{ineq Alon}
\left(\frac{1}{r^{r}}-\epsilon\right)n^{r}\leq ex(n,K_{r},H)\leq\left(\frac{1}{r^{r}}+\epsilon\right)n^{r}.
\end{align}
In Section 3, we will show that Theorem \ref{thm main} implies (\ref{ineq Alon}).

\section{Preliminaries}

We introduce some definitions and lemmas required for proofs in this section. 
An order $m$ dimension $n$ complex tensor $\mathcal{A}=(a_{i_{1}i_{2}...i_{m}})$ is a multidimensional array with $n^{m}$ entries, where $i_{j}=1,2,...,n$, $j=1,2,...,m$. 
Let $\mathbb{C}$ be the complex field and let $\mathbb{C}^{n}$ be the set of $n$-dimensional complex vectors.
For vectors $x=(x_{1},...,x_{n})^{T}\in\mathbb{C}^{n}$, the $\mathcal{A}x^{m-1}$ is a vector in $\mathbb{C}^{n}$ whose $i$-th component is $\sum_{i_{2},...,i_{m}=1}^{n}a_{ii_{2}...i_{m}}x_{i_{2}}\cdots x_{i_{m}}$ (see \cite{Qi}). 
If there exist $\lambda\in\mathbb{C}$ and nonzero vector $x=(x_{1},...,x_{n})^{T}\in\mathbb{C}^{n}$ satisfying
\begin{align*}
\mathcal{A}x^{m-1}=\lambda x^{[m-1]},
\end{align*}
then $\lambda$ is called an \emph{eigenvalue} of $\mathcal{A}$, and $x$ is called an \emph{eigenvector} of $\mathcal{A}$ corresponding to $\lambda$, where $x^{[m-1]}=(x_{1}^{m-1},...,x_{n}^{m-1})^{T}$. 
The largest modulus of all eigenvalues of $\mathcal{A}$ is called the spectral radius of $\mathcal{A}$.
A tensor is called nonnegative if its each entry is nonnegative. 
A tensor is called symmetric if its entries are invariant under any permutation of their indices.

\begin{lemma} \label{lem Qi} \textup{\cite{Qi2013}}
Suppose that $\mathcal{A}$ is an order $m$ dimension $n$ symmetric nonnegative tensor, with $m\geq2$. Then the spectral radius of $\mathcal{A}$ is equal to
\begin{align*}
\max\left\{x^{T}\mathcal{A}x^{m-1}: \sum_{i=1}^{n}x_{i}^{m}=1, x=(x_{1},\ldots,x_{n})^{T}\in\mathbb{R}_{+}^{n}\right\},
\end{align*}
where $\mathbb{R}_{+}^{n}$ is the set of $n$-dimensional nonnegative real vectors. 
And $\mathcal{A}$ has a nonnegative eigenvector corresponding to the spectral radius.
\end{lemma}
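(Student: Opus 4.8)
The plan is to establish the two inequalities $\mu(\mathcal{A}) \le \beta$ and $\beta \le \mu(\mathcal{A})$ separately, where I abbreviate $\mathcal{A}x^{m} := x^{T}\mathcal{A}x^{m-1} = \sum_{i_1,\dots,i_m} a_{i_1\cdots i_m} x_{i_1}\cdots x_{i_m}$ and set $\beta := \max\{\mathcal{A}x^{m} : \sum_{i=1}^{n} x_i^{m} = 1,\ x \in \mathbb{R}_{+}^{n}\}$. First I would note that $\beta$ is well defined: the feasible set $\{x \ge 0 : \sum_i x_i^{m} = 1\}$ is compact and $\mathcal{A}x^{m}$ is continuous, so the maximum is attained at some $x^{*} \ge 0$, and $\beta \ge 0$ because $\mathcal{A}$ is nonnegative and $x^{*} \ge 0$.

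For $\mu(\mathcal{A}) \le \beta$ I would run a Collatz--Wielandt-type argument using only nonnegativity. Let $\lambda$ be an eigenvalue with $|\lambda| = \mu(\mathcal{A})$ and eigenvector $z$, so $\mathcal{A}z^{m-1} = \lambda z^{[m-1]}$. Writing $|z| = (|z_1|,\dots,|z_n|)^{T}$ and using $a_{i_1\cdots i_m} \ge 0$ with the triangle inequality gives, coordinatewise,
\begin{align*}
\mu(\mathcal{A})\,|z_i|^{m-1} = \bigl|(\mathcal{A}z^{m-1})_i\bigr| \le (\mathcal{A}|z|^{m-1})_i .
\end{align*}
Since this inequality is homogeneous of degree $m-1$ in $z$, it also holds for the feasible normalized vector $w := |z|/\bigl(\sum_j |z_j|^{m}\bigr)^{1/m}$; multiplying the $i$-th inequality by $w_i \ge 0$ and summing over $i$ yields $\mu(\mathcal{A}) \le w^{T}\mathcal{A}w^{m-1} = \mathcal{A}w^{m} \le \beta$.

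For the reverse inequality $\beta \le \mu(\mathcal{A})$ I would show that $\beta$ is itself a nonnegative eigenvalue. Here the symmetry of $\mathcal{A}$ is essential: it gives $\nabla(\mathcal{A}x^{m}) = m\,\mathcal{A}x^{m-1}$, while $\nabla\bigl(\sum_i x_i^{m}\bigr) = m\,x^{[m-1]}$. If the maximizer $x^{*}$ were strictly positive, the Lagrange multiplier condition would read $\mathcal{A}(x^{*})^{m-1} = \theta\,(x^{*})^{[m-1]}$, and contracting with $x^{*}$ and invoking the constraint would force $\theta = \mathcal{A}(x^{*})^{m} = \beta$. This exhibits $\beta$ as an eigenvalue, whence $\beta = |\beta| \le \mu(\mathcal{A})$, and combined with the previous paragraph gives $\mu(\mathcal{A}) = \beta$.

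The main obstacle is precisely that $x^{*}$ may lie on the boundary, i.e. have zero entries, in which case the Lagrange computation does not directly produce an eigenvector of $\mathcal{A}$. I would resolve this by restricting to the support $S = \{i : x^{*}_i > 0\}$: the restriction $x^{*}_S$ is an interior maximizer for the principal subtensor $\mathcal{A}_S$, so the argument above makes $\beta$ an eigenvalue of $\mathcal{A}_S$ with a positive eigenvector, whence $\beta \le \mu(\mathcal{A}_S) \le \mu(\mathcal{A})$ by monotonicity of the spectral radius under passage to principal subtensors of a nonnegative tensor. Alternatively, and perhaps more cleanly, I would perturb to the strictly positive symmetric tensor $\mathcal{A}_{\varepsilon} = \mathcal{A} + \varepsilon\mathcal{J}$ with $\mathcal{J}$ the all-ones tensor, for which a first-order computation shows any maximizer must be interior (a zero coordinate $k$ could be increased since $(\mathcal{A}_{\varepsilon}(x^{*})^{m-1})_k > 0$); the interior argument then gives $\beta(\mathcal{A}_{\varepsilon}) = \mu(\mathcal{A}_{\varepsilon})$, and letting $\varepsilon \to 0^{+}$ while using continuity of $\beta(\cdot)$ and of the spectral radius in the entries of the tensor finishes the proof.
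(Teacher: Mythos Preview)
The paper does not prove this lemma at all; it is quoted verbatim from \cite{Qi2013} and used as a black box. So there is no ``paper's own proof'' to compare against, and your argument stands on its own.

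Your proof is correct. The Collatz--Wielandt step for $\mu(\mathcal{A})\le\beta$ is clean, and the Lagrange-multiplier step for $\beta\le\mu(\mathcal{A})$ is the natural one. The two devices you propose for the boundary case (principal subtensor monotonicity, or perturbation by $\varepsilon\mathcal{J}$ plus continuity of the spectral radius) both work, but each imports a nontrivial auxiliary fact. There is a more direct route that stays entirely inside your setup: if $x^{*}_{k}=0$, then because $m\ge 2$ the $k$-th component of the constraint gradient $m(x^{*})^{[m-1]}$ vanishes, so $e_{k}$ is a feasible first-order direction; optimality gives $(\mathcal{A}(x^{*})^{m-1})_{k}\le 0$, while nonnegativity of $\mathcal{A}$ and $x^{*}$ gives $(\mathcal{A}(x^{*})^{m-1})_{k}\ge 0$. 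Hence $(\mathcal{A}(x^{*})^{m-1})_{k}=0=\beta\,(x^{*}_{k})^{m-1}$, and the eigenvalue equation $\mathcal{A}(x^{*})^{m-1}=\beta\,(x^{*})^{[m-1]}$ holds at every coordinate. This makes $\beta$ an eigenvalue of $\mathcal{A}$ itself and finishes the proof without any appeal to subtensor monotonicity or spectral continuity.
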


H\"{o}lder's inequality and Maclaurin's inequality are introduced in \cite{Hardy}.

\begin{lemma} \label{ineq. Holder} \textup{\cite{Hardy}} (H\"{o}lder's inequality)
Let $x=(x_{1},\ldots,x_{n})^{T}$ and $y=(y_{1},\ldots,y_{n})^{T}$ be nonnegative vectors. If the positive numbers $p$ and $q$ satisfy $\frac{1}{p}+\frac{1}{q}=1$, then
\begin{align*}
\sum_{i=1}^{n}x_{i}y_{i}\leq\left(\sum_{i=1}^{n}x_{i}^{p}\right)^{\frac{1}{p}}\left(\sum_{i=1}^{n}y_{i}^{q}\right)^{\frac{1}{q}}
\end{align*}
with equality holding if and only if $x$ and $y$ are proportional.
\end{lemma}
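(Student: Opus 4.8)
The plan is to reduce the statement to the one-variable \emph{Young inequality}: for nonnegative reals $a,b$ and exponents $p,q$ with $\frac{1}{p}+\frac{1}{q}=1$ (which, for positive $p,q$, forces $p,q>1$),
\begin{align*}
ab\leq\frac{a^{p}}{p}+\frac{b^{q}}{q},
\end{align*}
with equality if and only if $a^{p}=b^{q}$. First I would dispose of the degenerate cases: if either $x$ or $y$ is the zero vector, both sides of the claimed inequality vanish, so there is nothing to prove and the equality clause holds trivially. Hence I may assume $X:=\left(\sum_{i=1}^{n}x_{i}^{p}\right)^{1/p}>0$ and $Y:=\left(\sum_{i=1}^{n}y_{i}^{q}\right)^{1/q}>0$.

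To establish Young's inequality I would invoke the strict concavity of the logarithm. For $a,b>0$, writing the weights $\frac{1}{p}$ and $\frac{1}{q}$ (which sum to $1$) and applying Jensen's inequality to $\log$ gives
\begin{align*}
\frac{1}{p}\log a^{p}+\frac{1}{q}\log b^{q}\leq\log\!\left(\frac{a^{p}}{p}+\frac{b^{q}}{q}\right);
\end{align*}
since the left-hand side equals $\log(ab)$, exponentiating yields the desired bound, and strict concavity forces equality precisely when $a^{p}=b^{q}$. The subcases where $a$ or $b$ is zero are immediate.

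Next I would normalize and sum. Setting $\hat{x}_{i}=x_{i}/X$ and $\hat{y}_{i}=y_{i}/Y$, so that $\sum_{i=1}^{n}\hat{x}_{i}^{p}=\sum_{i=1}^{n}\hat{y}_{i}^{q}=1$, Young's inequality applied to each pair $(\hat{x}_{i},\hat{y}_{i})$ and summed over $i$ gives
\begin{align*}
\sum_{i=1}^{n}\hat{x}_{i}\hat{y}_{i}\leq\frac{1}{p}\sum_{i=1}^{n}\hat{x}_{i}^{p}+\frac{1}{q}\sum_{i=1}^{n}\hat{y}_{i}^{q}=\frac{1}{p}+\frac{1}{q}=1.
\end{align*}
Multiplying through by $XY$ recovers $\sum_{i=1}^{n}x_{i}y_{i}\leq XY$, which is exactly the asserted bound.

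For the equality characterization I would trace the condition back through the sum: a sum of termwise inequalities is an equality if and only if each term is, so equality in the estimate above holds precisely when $\hat{x}_{i}^{p}=\hat{y}_{i}^{q}$ for every $i$, equivalently $x_{i}^{p}/X^{p}=y_{i}^{q}/Y^{q}$. This says exactly that the sequences $(x_{i}^{p})_{i}$ and $(y_{i}^{q})_{i}$ are proportional, which is the proportionality referred to in the statement. The only genuinely non-routine ingredient is Young's inequality itself; once that convexity fact is secured, the normalization, summation, and equality analysis are all bookkeeping.
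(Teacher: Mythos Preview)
Your argument is the standard and correct derivation of H\"older's inequality via Young's inequality, and the equality analysis is handled carefully. There is nothing to compare it against in the paper: this lemma is quoted from \cite{Hardy} without proof, so the authors do not supply their own argument. One small remark on the equality clause: the condition you (correctly) obtain is that the sequences $(x_{i}^{p})$ and $(y_{i}^{q})$ are proportional, which for general $p\neq 2$ is not the same as $x$ and $y$ themselves being proportional; the paper's phrasing is a bit loose here, and your interpretation is the right one.
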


\begin{lemma} \label{ineq. Maclaurin} \textup{\cite{Hardy}} (Maclaurin's inequality)
Let $x=(x_{1},\ldots,x_{n})^{T}$ be a nonnegative vector. Then
\begin{align*}
\frac{x_{1}+\cdots+x_{n}}{n}\geq\left(\frac{\sum\limits_{1\leq i_{1}<\cdots<i_{k}\leq n}x_{i_{1}}\cdots x_{i_{k}}}{\binom{n}{k}}\right)^{\frac{1}{k}}
\end{align*}
for each $k\in\{1,2,\ldots,n\}$, the equality holds if and only if $x_{1}=\cdots=x_{n}$.
\end{lemma}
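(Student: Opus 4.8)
The plan is to prove the single Maclaurin inequality $p_{1}\ge p_{k}^{1/k}$, where $p_{1}=\frac{e_{1}}{\binom{n}{1}}$ and $p_{k}=\frac{e_{k}}{\binom{n}{k}}$ are the normalized elementary symmetric means and $e_{k}=\sum_{1\le i_{1}<\cdots<i_{k}\le n}x_{i_{1}}\cdots x_{i_{k}}$. Since both sides are homogeneous of degree $1$ in $x$, the statement is equivalent to the polynomial inequality $e_{k}\le\binom{n}{k}\left(\frac{e_{1}}{n}\right)^{k}$. First I would fix the value $e_{1}=x_{1}+\cdots+x_{n}$ and regard $e_{k}$ as a continuous function on the compact simplex $\Delta=\{x\in\mathbb{R}_{+}^{n}:x_{1}+\cdots+x_{n}=e_{1}\}$; it therefore attains a maximum on $\Delta$, and it suffices to identify that maximum and show it equals $\binom{n}{k}(e_{1}/n)^{k}$.

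The key structural observation is that $e_{k}$ is multilinear (degree at most one in each variable) and symmetric. Freezing all coordinates except a pair $x_{i},x_{j}$, one can write $e_{k}=A\,x_{i}x_{j}+B\,(x_{i}+x_{j})+C$, where $A=e_{k-2}(\mathrm{rest})$, $B=e_{k-1}(\mathrm{rest})$, $C=e_{k}(\mathrm{rest})$ are elementary symmetric polynomials in the remaining $n-2$ variables; in particular $A\ge 0$ since the $x$'s are nonnegative. Thus, keeping $x_{i}+x_{j}$ fixed, $e_{k}$ is a nondecreasing function of the product $x_{i}x_{j}$, which is maximized when $x_{i}=x_{j}$. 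Replacing any two coordinates by their common average (a sum-preserving \emph{smoothing} step) therefore never decreases $e_{k}$. If a maximizer on $\Delta$ had two distinct coordinates with associated $A>0$, this step would strictly increase $e_{k}$, contradicting maximality; iterating the argument drives the point to the barycenter $x_{1}=\cdots=x_{n}=e_{1}/n$, where $e_{k}=\binom{n}{k}(e_{1}/n)^{k}$, which yields the inequality.

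For the equality case I would trace equality back through the smoothing: equality in Maclaurin forces $x_{i}=x_{j}$ for every pair whose coefficient $A=e_{k-2}(\mathrm{rest})$ is positive. The main obstacle is precisely this equality analysis, because $A$ can vanish when too many coordinates are zero. I expect to handle the degenerate configurations separately, noting that if fewer than $k$ coordinates are nonzero then $e_{k}=0$ while $p_{1}>0$ (strict inequality), and checking that no nonconstant vector with enough nonzero entries can saturate the bound; together these complete the equality characterization.

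As an alternative route I would observe that the full Maclaurin chain $p_{1}\ge p_{2}^{1/2}\ge\cdots\ge p_{n}^{1/n}$ follows from Newton's inequalities $p_{k-1}p_{k+1}\le p_{k}^{2}$: writing $r_{k}=p_{k}/p_{k-1}$, log-concavity gives $r_{1}\ge r_{2}\ge\cdots$, whence $p_{k}^{1/k}=(r_{1}\cdots r_{k})^{1/k}\le r_{1}=p_{1}$. Newton's inequalities in turn come from the real-rootedness of $\prod_{i}(t+x_{i})$ together with Rolle's theorem applied to its derivatives, but this machinery is heavier than the self-contained smoothing argument, so I would retain the smoothing proof as the primary one.
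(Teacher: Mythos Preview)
The paper does not give its own proof of this lemma; it is simply quoted as Maclaurin's inequality with a citation to Hardy--Littlewood--P\'olya. So there is no in-paper argument to compare against.

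Your smoothing proof is correct. One small tightening: the phrase ``iterating the argument drives the point to the barycenter'' is a bit loose as written, but you do not actually need any iteration. Once you know (by compactness) that a maximizer $x^{\ast}$ of $e_{k}$ on the simplex exists and has $e_{k}(x^{\ast})>0$ (since the barycenter already gives a positive value), at least $k$ coordinates of $x^{\ast}$ are positive, say $x^{\ast}_{1},\ldots,x^{\ast}_{k}>0$. For any pair $i,j$ the coefficient $A=e_{k-2}(\text{rest})$ is then strictly positive, because the ``rest'' always contains at least $k-2$ of the positive coordinates $x^{\ast}_{1},\ldots,x^{\ast}_{k}$; hence $x^{\ast}_{i}=x^{\ast}_{j}$ for \emph{all} pairs, and the barycenter is the unique maximizer. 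This single observation simultaneously yields the inequality and the full equality characterization, so the separate case analysis you sketch for the equality case is not needed.

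Your alternative route via Newton's inequalities is in fact the argument given in the cited reference, so in that sense it is the ``paper's proof'' by proxy. The two approaches trade off: the smoothing argument is elementary and self-contained and pinpoints the extremizer directly, while the Newton/log-concavity route is heavier but delivers the entire Maclaurin chain $p_{1}\ge p_{2}^{1/2}\ge\cdots\ge p_{n}^{1/n}$ in one stroke rather than just the first link $p_{1}\ge p_{k}^{1/k}$.
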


\begin{lemma}\textup{\cite{Liu}}\label{lem inequation of clique}
For a graph $G$ on $n$ vertices, we have
\begin{align}\label{equ p-spectral radio 1}
c_{t}(G)\leq\frac{n}{t}\mu^{(t)}(G).
\end{align}
Furthermore, if the number of $t$-cliques containing $i$ are equal for all $i\in V(G)$, then equality holds in \textup{(\ref{equ p-spectral radio 1})}.
\end{lemma}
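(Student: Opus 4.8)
The plan is to prove both claims directly from Qi's variational characterization of the spectral radius of a symmetric nonnegative tensor (Lemma \ref{lem Qi}), which applies since $\mathcal{A}(G)$ is symmetric, nonnegative, and of order $t\ge 2$. The central computational identity I would establish first is a clean expression for the form $x^{T}\mathcal{A}(G)x^{t-1}$ evaluated at a nonnegative vector $x$. Expanding $x^{T}\mathcal{A}(G)x^{t-1}=\sum_{i_{1},\ldots,i_{t}}a_{i_{1}\cdots i_{t}}x_{i_{1}}\cdots x_{i_{t}}$ and grouping together the $t!$ index orderings $(i_{1},\ldots,i_{t})$ that come from a single $t$-clique $K=\{j_{1},\ldots,j_{t}\}$, each contributing $\frac{1}{(t-1)!}\prod_{j\in K}x_{j}$, yields
$$x^{T}\mathcal{A}(G)x^{t-1}=t\sum_{K\in C_{t}(G)}\prod_{j\in K}x_{j}.$$
This identity is the workhorse for both directions of the lemma.

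For the inequality (\ref{equ p-spectral radio 1}), I would feed the uniform vector $x=n^{-1/t}\mathbf{1}$ into this identity. It satisfies $\sum_{i}x_{i}^{t}=1$, so it is admissible in Lemma \ref{lem Qi}, and the identity immediately gives $x^{T}\mathcal{A}(G)x^{t-1}=t\,c_{t}(G)/n$. Since $\mu_{t}(G)$ is the maximum of this form over all admissible nonnegative vectors, we obtain $\mu_{t}(G)\ge t\,c_{t}(G)/n$, which rearranges exactly to $c_{t}(G)\le\frac{n}{t}\mu_{t}(G)$.

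For the equality statement, I would prove the matching upper bound $\mu_{t}(G)\le t\,c_{t}(G)/n$ under the hypothesis that every vertex lies in the same number $d$ of $t$-cliques. Starting from the core identity for an arbitrary admissible $x$, I apply AM-GM inside each clique (the $k=t$ instance of Maclaurin's inequality, Lemma \ref{ineq. Maclaurin}, applied to the $t$ quantities $x_{j}^{t}$ for $j\in K$) to get $\prod_{j\in K}x_{j}\le\frac{1}{t}\sum_{j\in K}x_{j}^{t}$. Summing over $K$ and exchanging the order of summation converts the clique sum into a vertex sum weighted by the number $d_{t}(i)$ of $t$-cliques containing $i$, giving $x^{T}\mathcal{A}(G)x^{t-1}\le\sum_{i}d_{t}(i)x_{i}^{t}$. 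Under the regularity hypothesis $d_{t}(i)\equiv d$ this equals $d\sum_{i}x_{i}^{t}=d$, so $\mu_{t}(G)\le d$. Finally, double counting yields $\sum_{i}d_{t}(i)=t\,c_{t}(G)$, hence $d=t\,c_{t}(G)/n$, and combining with the lower bound forces $\mu_{t}(G)=t\,c_{t}(G)/n$, i.e. equality holds in (\ref{equ p-spectral radio 1}).

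The two inequalities themselves are routine once the core identity is in place; the step that needs the most care is the combinatorial bookkeeping in that identity, namely correctly matching the $t!$ orderings against the weight $\frac{1}{(t-1)!}$ to produce the factor $t$, together with the parallel double-counting relation $\sum_{i}d_{t}(i)=t\,c_{t}(G)$. I expect the only genuinely delicate point to be confirming that AM-GM is applied to exactly the right $t$ quantities per clique so that the exponents match the normalization $\sum_{i}x_{i}^{t}=1$; beyond that, everything reduces to direct substitution.
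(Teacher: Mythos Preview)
The paper does not contain its own proof of this lemma: it is quoted verbatim from \cite{Liu} and only used as a black box (in Lemma~\ref{ineq. t-clique spec.} and in the proofs of Theorems~\ref{thm main} and~\ref{cor_Alon}). So there is no paper-side argument to compare against.

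That said, your proposal is correct and self-contained. The core identity $x^{T}\mathcal{A}(G)x^{t-1}=t\sum_{K\in C_{t}(G)}\prod_{j\in K}x_{j}$ is exactly the computation the paper records as (\ref{equ 1}) in the proof of Lemma~\ref{ineq. t-clique spec.}, so the bookkeeping you flag as delicate is confirmed there. Plugging in $x=n^{-1/t}\mathbf{1}$ and invoking Lemma~\ref{lem Qi} gives the lower bound $\mu_{t}(G)\ge t\,c_{t}(G)/n$ immediately. For the equality statement, your AM--GM step $\prod_{j\in K}x_{j}\le\frac{1}{t}\sum_{j\in K}x_{j}^{t}$ followed by the swap of summation to get $\sum_{i}d_{t}(i)x_{i}^{t}$ is clean, and the double count $\sum_{i}d_{t}(i)=t\,c_{t}(G)$ closes the argument. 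One small remark: your upper-bound argument actually gives, without the regularity hypothesis, the sharper inequality $\mu_{t}(G)\le\max_{i}d_{t}(i)$; the regularity is only used at the very end to match this to $t\,c_{t}(G)/n$.
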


\begin{lemma}\textup{\cite{Liu}}\label{lem main of 2th}
Let $G$ be a graph with $n$ vertices. 
If $G$ is $K_{r+1}$-free, then
\begin{align*}
\mu^{(r)}(G)\leq(n_{1}n_{2}\cdots n_{r})^{\frac{r-1}{r}}
\end{align*}
with equality holding if and only if $G\cong T_{r}(n)$, 
where $n_{s}$ is the number of vertices in $s$th part of $T_{r}(n)$ for $s=1,2,\ldots,r$.
\end{lemma}

\begin{lemma} \label{Erdos} \textup{\cite{Erdos}}
Let $\epsilon$ be an arbitrary positive number, and let $G$ be an $H$-free graph on $n$ vertices. 
There exists a positive number  $n_{0}(\epsilon, H)$ such that for $n > n_{0}(\epsilon, H)$, one can remove less than $\epsilon n^{2}$ edges from $G$ so that the remaining graph is $K_{r+1}$-free, where $r+1=\chi(H)$.
\end{lemma}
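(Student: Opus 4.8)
The plan is to prove the lemma via Szemer\'edi's Regularity Lemma together with the standard embedding (Key) Lemma, which is the cleanest route to a ``cleaning'' statement of this kind. Fix the target constant $\epsilon>0$. First I would choose a density threshold $d$ and a regularity parameter $\eta$, both small in terms of $\epsilon$ and of $H$ (the precise constraints are pinned down in the last step), and apply the Regularity Lemma to $G$ to obtain an $\eta$-regular equitable partition $V(G)=V_{0}\cup V_{1}\cup\cdots\cup V_{k}$ with $m_{0}\le k\le M(\eta,m_{0})$ and $|V_{0}|$ negligible. I then build the \emph{reduced graph} $R$ on vertex set $\{1,\ldots,k\}$, joining $i$ and $j$ exactly when the pair $(V_{i},V_{j})$ is $\eta$-regular with density at least $d$.

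The crux is the claim that $R$ contains no $K_{r}$. Suppose to the contrary that some $r$ clusters $V_{i_{1}},\ldots,V_{i_{r}}$ are pairwise $\eta$-regular with density at least $d$. Since $\chi(H)=r$, a proper $r$-colouring of $H$ exhibits $H$ as a subgraph of the complete $r$-partite graph in which every part has size $t:=|V(H)|$; that is, $H$ embeds into the $t$-blow-up of $K_{r}$. By the Embedding Lemma, once $\eta$ is small enough relative to $d$, $\Delta(H)$ and $t$, and once the clusters are large enough (which holds for $n>n_{0}$, as $|V_{i}|\ge n/k\ge n/M\to\infty$), these $r$ mutually dense-regular clusters already contain a copy of $H$. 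This contradicts the hypothesis that $G$ is $H$-free, so $R$ must be $K_{r}$-free.

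Having established that $R$ is $K_{r}$-free, I would clean $G$ by deleting three kinds of edges: all edges incident to $V_{0}$ or lying inside a single cluster, all edges between pairs that fail to be $\eta$-regular, and all edges between regular pairs of density below $d$. A routine count bounds these three contributions by roughly $\tfrac{n^{2}}{2m_{0}}$, $\eta n^{2}$ and $\tfrac{d}{2}n^{2}$ respectively, and I would have chosen $d,\eta<\epsilon/4$ and $m_{0}$ large at the outset so that the total stays below $\epsilon n^{2}$ for all $n>n_{0}$. In the cleaned graph every surviving edge joins two \emph{distinct} clusters that are adjacent in $R$; hence any $K_{r}$ in the cleaned graph would place its $r$ vertices in $r$ pairwise distinct, pairwise $R$-adjacent clusters (no intra-cluster edges survive), producing a $K_{r}$ in $R$, which is impossible. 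Thus the cleaned graph is $K_{r}$-free, as required.

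The main obstacle is the embedding step, specifically the order of the quantifiers: the regularity parameter $\eta$ must be chosen small enough \emph{both} to make the cleaning count work \emph{and} to satisfy the hypotheses of the Embedding Lemma for the fixed graph $H$, while the partition size $k$, and hence the minimum cluster size, is controlled only afterward by the Regularity Lemma; checking that a single $n_{0}(\epsilon,H)$ can be extracted uniformly is the delicate bookkeeping. A more elementary alternative, avoiding regularity, is a supersaturation argument: were the cleaning impossible, one forces $G$ to contain a dense $r$-partite-like configuration on large vertex sets and then invokes the K\H{o}v\'ari--S\'os--Tur\'an / Erd\H{o}s--Stone embedding (the mechanism underlying Theorem~\ref{thm stone-sim.}) to locate $H$; this replaces the regularity machinery by a hands-on counting estimate but follows the same logical skeleton.
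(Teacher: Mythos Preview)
The paper does not prove this lemma; it is cited from Erd\H{o}s, Frankl and R\"odl and used as a black box in the proof of the main theorem. Your sketch via Szemer\'edi's Regularity Lemma together with the Embedding (Key) Lemma is correct and is exactly the route taken in that original reference, so there is nothing further to compare against within the present paper.
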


\section{Proofs of theorems}
In order to prove Theorem \ref{thm main}, we first give a lemma that provides an inequality between the number of $t$-cliques in a graph $G$ and the $t$-clique spectral radius. 
It is a generalization of Wilf's \cite{Wilf} inequality $\mu(G)\leq \sqrt{2(1-1/n)c_{2}(G)}$, where $\mu(G)$ is the spectral radius of the adjacency matrix of $G$.
\begin{lemma} \label{ineq. t-clique spec.}
Let $G$ be a graph with $n$ vertices. Then
\begin{align*}
\mu^{(t)}(G)\leq \frac{t}{n}\binom{n}{t}^{\frac{1}{t}}\left(c_{t}(G)\right)^{\frac{t-1}{t}}.
\end{align*}
Futhermore, equality holds if $G$ is a complete graph or a $K_{t}$-free graph.
\end{lemma}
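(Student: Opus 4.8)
The plan is to use the variational characterization of the spectral radius from Lemma \ref{lem Qi} together with H\"older's inequality (Lemma \ref{ineq. Holder}) and Maclaurin's inequality (Lemma \ref{ineq. Maclaurin}). Since $\mathcal{A}(G)$ is a symmetric nonnegative tensor of order $t$, we have
\begin{align*}
\mu_{t}(G)=\max\left\{x^{T}\mathcal{A}(G)x^{t-1}:\sum_{i=1}^{n}x_{i}^{t}=1,\ x\in\mathbb{R}_{+}^{n}\right\},
\end{align*}
and by the definition of the $t$-clique tensor, $x^{T}\mathcal{A}(G)x^{t-1}=t\sum_{\{i_{1},\ldots,i_{t}\}\in C_{t}(G)}x_{i_{1}}\cdots x_{i_{t}}$, since each $t$-clique contributes $t!$ ordered tuples each weighted by $\frac{1}{(t-1)!}$. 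So the task reduces to bounding $\sum_{\{i_{1},\ldots,i_{t}\}\in C_{t}(G)}x_{i_{1}}\cdots x_{i_{t}}$ subject to $\sum x_{i}^{t}=1$.

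First I would apply H\"older's inequality to the sum over cliques: writing each term $x_{i_{1}}\cdots x_{i_{t}}$ and summing over the $c_{t}(G)$ cliques, pair this with the constant vector to get
\begin{align*}
\sum_{\{i_{1},\ldots,i_{t}\}\in C_{t}(G)}x_{i_{1}}\cdots x_{i_{t}}\leq \left(c_{t}(G)\right)^{\frac{1}{t}}\left(\sum_{\{i_{1},\ldots,i_{t}\}\in C_{t}(G)}\left(x_{i_{1}}\cdots x_{i_{t}}\right)^{\frac{t}{t-1}}\right)^{\frac{t-1}{t}},
\end{align*}
using $p=t$, $q=\frac{t}{t-1}$. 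Next I would bound the remaining sum by extending it to the sum over \emph{all} $t$-subsets of $V(G)$, and then apply Maclaurin's inequality (Lemma \ref{ineq. Maclaurin}) to the nonnegative vector $\left(x_{1}^{\frac{t}{t-1}},\ldots,x_{n}^{\frac{t}{t-1}}\right)$ with $k=t$: this yields
\begin{align*}
\sum_{1\leq i_{1}<\cdots<i_{t}\leq n}\left(x_{i_{1}}\cdots x_{i_{t}}\right)^{\frac{t}{t-1}}\leq\binom{n}{t}\left(\frac{\sum_{i=1}^{n}x_{i}^{\frac{t}{t-1}}}{n}\right)^{t}.
\end{align*}
Finally, since $\sum_{i=1}^{n}x_{i}^{t}=1$, a further application of the power-mean (or H\"older) inequality gives $\sum_{i=1}^{n}x_{i}^{\frac{t}{t-1}}\leq n^{\frac{t-2}{t-1}}\left(\sum_{i=1}^{n}x_{i}^{t}\right)^{\frac{1}{t-1}}=n^{\frac{t-2}{t-1}}$. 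Chaining these estimates together and simplifying the exponents of $n$ and $\binom{n}{t}$ should produce $\mu_{t}(G)\leq \frac{t}{n}\binom{n}{t}^{\frac{1}{t}}\left(c_{t}(G)\right)^{\frac{t-1}{t}}$.

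The main obstacle is bookkeeping the equality cases and the exponents carefully. The inequality chain loses nothing when $G$ is complete (all $t$-subsets are cliques, so extending the sum is lossless, and the optimal $x$ is constant, matching both H\"older equality conditions and Maclaurin's) and when $G$ is $K_{t}$-free (then $c_{t}(G)=0$ and $\mu_{t}(G)=0$, so both sides vanish); I would verify that the chosen H\"older pairings are tight exactly in these situations. A secondary subtlety is that the maximizing vector in Lemma \ref{lem Qi} is only known to be nonnegative, not positive, so I must ensure the inequalities are applied in a way valid for vectors with zero entries — all three lemmas permit this, so the argument goes through.
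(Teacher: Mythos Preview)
Your H\"older exponents are swapped, and this makes the argument fail for $t\geq 3$. With $p=t$ on the constant vector and $q=\frac{t}{t-1}$ on the products you obtain
\[
\sum_{\{i_{1},\ldots,i_{t}\}\in C_{t}(G)}x_{i_{1}}\cdots x_{i_{t}}\leq \left(c_{t}(G)\right)^{\frac{1}{t}}\left(\sum_{\{i_{1},\ldots,i_{t}\}\in C_{t}(G)}\left(x_{i_{1}}\cdots x_{i_{t}}\right)^{\frac{t}{t-1}}\right)^{\frac{t-1}{t}}.
\]
Carrying your subsequent steps through (Maclaurin on $\bigl(x_{i}^{t/(t-1)}\bigr)$, then power-mean to bound $\sum x_{i}^{t/(t-1)}\leq n^{(t-2)/(t-1)}$) yields
\[
\mu_{t}(G)\leq \frac{t}{n}\,\left(c_{t}(G)\right)^{\frac{1}{t}}\binom{n}{t}^{\frac{t-1}{t}},
\]
which has the exponents on $c_{t}(G)$ and $\binom{n}{t}$ interchanged relative to the target. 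Since $c_{t}(G)\leq\binom{n}{t}$, this is a \emph{weaker} inequality than the one stated (strictly weaker whenever $t\geq 3$ and $G$ is neither complete nor $K_{t}$-free), so the chain does not prove the lemma.

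The fix is simply to reverse the roles: take $p=\frac{t}{t-1}$ on the constant vector and $q=t$ on the products. Then H\"older gives the correct factor $\left(c_{t}(G)\right)^{\frac{t-1}{t}}$, and the other factor is $\bigl(\sum_{\text{cliques}} x_{i_{1}}^{t}\cdots x_{i_{t}}^{t}\bigr)^{1/t}$. Now Maclaurin applies to the vector $\bigl(x_{1}^{t},\ldots,x_{n}^{t}\bigr)$, and the constraint $\sum_{i} x_{i}^{t}=1$ is used directly, without any extra power-mean step, giving $\frac{1}{n}\binom{n}{t}^{1/t}$ and hence the desired bound. This is exactly how the paper proceeds.
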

\begin{proof}
Let $\mathcal{A}(G)=(a_{i_{1}i_{2}\cdots i_{t}})$ be the $t$-clique tensor of $G$. 
Let a nonnegative vector $x=(x_{1},x_{2},\ldots,x_{n})^{T}$ be the eigenvector corresponding to the $t$-clique spectral radius $\mu^{(t)}(G)$ and $x_{1}^{t}+x_{2}^{t}+\cdots+x_{n}^{t}=1$. 
By Lemma \ref{lem Qi}, we obtain
\begin{align}\label{equ 1}
\mu^{(t)}(G)&=x^{T}\mathcal{A}(G)x^{t-1} \nonumber \\
&=\sum_{i_{1},i_{2},\ldots,i_{t}=1}^{n}a_{i_{1}i_{2}\cdots i_{t}}x_{i_{1}}x_{i_{2}}\cdots x_{i_{t}} 
=\sum_{\{i_{1},i_{2},\ldots,i_{t}\}\in C_{t}(G)}\frac{1}{(t-1)!}x_{i_{1}}x_{i_{2}}\cdots x_{i_{t}} 
\nonumber \\ 
&=t\sum_{\substack{\{i_{1},i_{2},\ldots,i_{t}\}\in C_{t}(G),\\i_{1}<i_{2}<\cdots<i_{t}}}x_{i_{1}}x_{i_{2}}\cdots x_{i_{t}}.
\end{align}
From (\ref{equ 1}) and Lemma \ref{ineq. Holder}, we get
\begin{align}\label{equ 2}
\mu^{(t)}(G)
&\leq t\left(\sum_{\substack{\{i_{1},i_{2},\ldots,i_{t}\}\in C_{t}(G),\\i_{1}<i_{2}<\cdots<i_{t}}}1^{\frac{t}{t-1}}\right)^{\frac{t-1}{t}}\left(\sum_{\substack{\{i_{1},i_{2},\ldots,i_{t}\}\in C_{t}(G),\\i_{1}<i_{2}<\cdots<i_{t}}}x_{i_{1}}^{t}x_{i_{2}}^{t}\cdots x_{i_{t}}^{t}\right)^{\frac{1}{t}} \nonumber\\ 
&= t\left(c_{t}(G)\right)^{\frac{t-1}{t}}\left(\sum_{\substack{\{i_{1},i_{2},\ldots,i_{t}\}\in C_{t}(G),\\i_{1}<i_{2}<\cdots<i_{t}}}x_{i_{1}}^{t}x_{i_{2}}^{t}\cdots x_{i_{t}}^{t}\right)^{\frac{1}{t}}.
\end{align}
By Lemma \ref{ineq. Maclaurin}, we have
\begin{align}\label{equ 4}
\left(\sum_{\substack{\{i_{1},i_{2},\ldots,i_{t}\}\in C_{t}(G),\\i_{1}<i_{2}<\cdots<i_{t}}}x_{i_{1}}^{t}x_{i_{2}}^{t}\cdots x_{i_{t}}^{t}\right)^{\frac{1}{t}}
&\leq\left(\sum_{1\leq i_{1}<i_{2}<\cdots<i_{t}\leq n}x_{i_{1}}^{t}x_{i_{2}}^{t}\cdots x_{i_{t}}^{t}\right)^{\frac{1}{t}} \nonumber \\
&\leq\left(\frac{x_{1}^{t}+x_{2}^{t}+\cdots+x_{n}^{t}}{n}\right)\binom{n}{t}^{\frac{1}{t}}.
\end{align}
Combining inequalities (\ref{equ 2}) and (\ref{equ 4}), we obtain
\begin{align*}
\mu^{(t)}(G)\leq \frac{t}{n}\binom{n}{t}^{\frac{1}{t}}\left(c_{t}(G)\right)^{\frac{t-1}{t}}.
\end{align*}

If $G$ is a  $K_{t}$-free graph, we get $c_{t}(G)=0$ and the $t$-clique tensor of $G$ is a zero tensor implies that $\mu^{(t)}(G)=0$. 
If $G$ is a complete graph, the number of $t$-cliques containing each vertex in $V(G)$ is equal, thus $\mu^{(t)}(G)=\frac{t}{n}\binom{n}{t}$ by Lemma \ref{lem inequation of clique}. 
Therefore, equality holds if $G$ is a complete graph or a $K_{t}$-free graph.
\end{proof}

Next, we give the proof of Theorem \ref{thm main}.
\begin{proof}[Proof of Theorem \ref{thm main}]
Let $H$ be a graph with chromatic number $\chi(H)=r+1$. 
Suppose that the number of vertices in $s$th part of the $r$-partite Tur\'{a}n graph $T_{r}(n)$ is $n_{s}$ for $s=1,2,\ldots,r$. 
Since the chromatic number of $T_{r}(n)$ is equal to $r$, the graph $T_{r}(n)$ is $H$-free. 
Thus
\begin{align*}
ex_{\mu^{(r)}}(n,H)\geq \mu^{(r)}(T_{r}(n)).
\end{align*}
For the graph $T_{r}(n)$, by Lemma \ref{lem inequation of clique}, we have
\begin{align*}
\mu^{(r)}(T_{r}(n))\geq\frac{rc_{r}(T_{r}(n))}{n}.
\end{align*}
The number of $r$-cliques in $T_{r}(n)$ is equal to $n_{1}n_{2}\cdots n_{r}$,  where $n_{s}$ is equal to $\lfloor\frac{n}{r}\rfloor$ or $\lceil\frac{n}{r}\rceil$ for $s=1,2,\ldots,r$. 
Then
\begin{align*}
\frac{rc_{r}(T_{r}(n))}{n}=\frac{r}{n}\left(n_{1}n_{2}\cdots n_{r}\right)\geq\frac{r}{n}\left(\frac{n}{r}-1\right)\cdots \left(\frac{n}{r}-1\right)
=\frac{r}{n}\left(\frac{n}{r}-1\right)^{r}.
\end{align*}
For an arbitrary positive number $\epsilon$, let $n_{0}^{\prime}(\epsilon,H)=\frac{2^{r-1}}{\epsilon r^{r-2}}$.
For $n > n_{0}^{\prime}(\epsilon,H)$, we have
\begin{align*}
ex_{\mu^{(r)}}(n,H)
&\geq\frac{r}{n}\left(\frac{n}{r}-1\right)^{r}=\frac{r}{n}\left(\sum_{k=0}^{r}\binom{r}{k}\left(\frac{n}{r}\right)^{r-k}(-1)^{k}\right)
\geq\frac{r}{n}\left(\frac{n^{r}}{r^{r}}-2^{r-1}\frac{n^{r-1}}{r^{r-1}}\right)\\
&=\left(\frac{1}{r^{r-1}}-\frac{2^{r-1}}{r^{r-2}}\frac{1}{n}\right)n^{r-1}\geq\left(\frac{1}{r^{r-1}}-\epsilon\right)n^{r-1}.
\end{align*}

On the other hand, for an arbitrary positive number $\epsilon$, let the positive number $\epsilon^{\prime}\in \left(0,\left(\frac{\epsilon}{e}\right)^{\frac{r}{r-1}}\frac{1}{e^{r-2}}\right]$.
Since $G$ is $H$-free, by Lemma \ref{Erdos}, there exists a positive number $n_{0}^{\prime\prime}(\epsilon^{\prime}, H)$ such that for $n > n_{0}^{\prime\prime}(\epsilon^{\prime}, H)$, 
one remove $\epsilon^{\prime} n^{2}$ edges from $G$ so that the remaining graph $G^{\prime}$ is $K_{r+1}$-free. 
Each edge of $G$ is contained in at most $\binom{n-2}{r-2}$ copies of $K_{r}$ which implies that we remove at most $\epsilon^{\prime} n^{2}\binom{n-2}{r-2}$ copies of $K_{r}$ from $G$. 
Let $\mathcal{A}(G)$ and $\mathcal{A}(G^{\prime})$ be the $r$-clique tensors of $G$ and $G^{\prime}$, respectively.
Let $x$ be the nonnegative eigenvector of $\mathcal{A}(G)$ corresponding to $\mu^{(r)}(G)$. 
Thus, we have 
\begin{align*}
\mu^{(r)}(G) = x^{T}\mathcal{A}(G)x^{r-1}=x^{T}\mathcal{A}(G^{\prime})x^{r-1}+x^{T}(\mathcal{A}(G)-\mathcal{A}(G^{\prime}))x^{r-1}.
\end{align*}
Let $\mathcal{G}$ be the set of subgraphs of $G$, where the $t$-clique tensor of each graph in $\mathcal{G}$ is the tensor $\mathcal{A}(G)-\mathcal{A}(G^{\prime})$. 
The graphs in $\mathcal{G}$ have the same number of $r$-cliques.
There is a graph $G^{\prime\prime}$ corresponding to the tensor $\mathcal{A}(G)-\mathcal{A}(G^{\prime})$. 
By Lemma \ref{lem Qi}, we get 
\begin{align*}
\mu^{(r)}(G)\leq \mu^{(r)}(G^{\prime})+\mu^{(r)}(G^{\prime\prime}).
\end{align*}
Since we remove at most $\epsilon^{\prime} n^{2}\binom{n-2}{r-2}$ copies of $K_{r}$ from $G$, the $r$-cliques in $G^{\prime\prime}$ is less than or equal to $\epsilon^{\prime} n^{2}\binom{n-2}{r-2}$.
By Lemma \ref{ineq. t-clique spec.}, we obtain 
\begin{align}\label{ieq. myself}
\mu^{(r)}(G^{\prime\prime})
&\leq \frac{r}{n}\binom{n}{r}^{\frac{1}{r}}\left(c_{r}(G^{\prime\prime})\right)^{\frac{r-1}{r}}
\leq\frac{r}{n}\binom{n}{r}^{\frac{1}{r}}\left(\epsilon^{\prime} n^{2}\binom{n-2}{r-2}\right)^{\frac{r-1}{r}}\nonumber\\
&\leq\frac{r}{n}\frac{en}{r}\left(\epsilon^{\prime}n^{2}\left(\frac{en}{r-2}\right)^{r-2}\right)^{\frac{r-1}{r}}
=e\left(\epsilon^{\prime}\left(\frac{e}{r-2}\right)^{r-2}\right)^{\frac{r-1}{r}}n^{r-1}\nonumber\\
&\leq e\left(\epsilon^{\prime}e^{r-2}\right)^{\frac{r-1}{r}}n^{r-1}
\leq\epsilon n^{r-1},
\end{align}
the third inequality in (\ref{ieq. myself}) follows from $\binom{n}{r}\leq\left(\frac{en}{r}\right)^{r}$, where $\ln e=1$.
The graph $G^{\prime}$ is $K_{r+1}$-free, by Lemma \ref{lem main of 2th}, we have
\begin{align*}
\mu^{(r)}(G^{\prime})\leq\mu^{(r)}(T_{r}(n)).
\end{align*}
For the graph $T_{r}(n)$, by Lemma \ref{lem main of 2th}, it is shown that $\mu^{(r)}(T_{r}(n))=(n_{1}n_{2}\cdots n_{r})^{\frac{r-1}{r}}$.
By mean inequality, we get
\begin{align*}
\mu^{(r)}(T_{r}(n))=(n_{1}n_{2}\cdots n_{r})^{\frac{r-1}{r}}\leq\left(\frac{n_{1}+n_{2}+\cdots+n_{r}}{r}\right)^{r-1}=\left(\frac{n}{r}\right)^{r-1}.
\end{align*}
Thus, we have
\begin{align*}
ex_{\mu^{(r)}}(n,H)\leq \left(\frac{n}{r}\right)^{r-1}+\epsilon n^{r-1}=\left(\frac{1}{r^{r-1}}+\epsilon\right)n^{r-1}.
\end{align*}
For an arbitrary positive number $\epsilon$, let $n_{0}(\epsilon, H)=\max\{n_{0}^{\prime}(\epsilon,H),n_{0}^{\prime\prime}(\epsilon^{\prime}, H)\}$. 
For $n > n_{0}(\epsilon, H)$, 
\begin{align*}
\left(\frac{1}{r^{r-1}}-\epsilon\right)n^{r-1}\leq ex_{\mu^{(r)}}(n,H)\leq\left(\frac{1}{r^{r-1}}+\epsilon\right)n^{r-1}.
\end{align*}
This completes the proof.
\end{proof}

In the following, we give the proof of (\ref{ineq Alon}) based on Theorem \ref{thm main}.
\begin{proof}[Proof of Theorem \ref{cor_Alon}]
Since the graph $T_{r}(n)$ is $H$-free, we get 
\begin{align*}
ex(n,K_{r},H)\geq c_{r}(T_{r}(n)).
\end{align*}
For an arbitrary positive number $\epsilon$, let $n_{0}^{\prime}(\epsilon,H)=\frac{2^{r-1}}{\epsilon r^{r-1}}$.
For $n > n_{0}^{\prime}(\epsilon,H)$, we have
\begin{align*}
c_{r}(T_{r}(n))
&\geq\left(\frac{n}{r}-1\right)^{r}
\geq\left(\frac{n^{r}}{r^{r}}-2^{r-1}\frac{n^{r-1}}{r^{r-1}}\right)
=\left(\frac{1}{r^{r}}-\frac{2^{r-1}}{r^{r-1}}\frac{1}{n}\right)n^{r}\\
&\geq\left(\frac{1}{r^{r}}-\epsilon\right)n^{r}.
\end{align*}
By Lemma \ref{lem inequation of clique}, we obtain
\begin{align*}
ex(n,K_{r},H)\leq\frac{n}{r}ex_{\mu^{(r)}}(n,H).
\end{align*}
For an arbitrary positive number $\epsilon$, let the positive number $\epsilon_{0}\in \left(0,r\epsilon\right]$. 
From Theorem \ref{thm main}, there is a positive number $n_{0}^{\prime\prime}(\epsilon_{0}, H)$ such that for $n > n_{0}^{\prime\prime}(\epsilon_{0}, H)$,
\begin{align*}
ex(n,K_{r},H)\leq\frac{n}{r}\left(\frac{1}{r^{r-1}}+\epsilon_{0}\right)n^{r-1}\leq\left(\frac{1}{r^{r}}+\epsilon\right)n^{r}.
\end{align*}
For an arbitrary positive number $\epsilon$, let $n_{0}(\epsilon, H)=\max\{n_{0}^{\prime}(\epsilon,H),n_{0}^{\prime\prime}(\epsilon_{0}, H)\}$. 
For $n > n_{0}(\epsilon, H)$, 
\begin{align*}
\left(\frac{1}{r^{r}}-\epsilon\right)n^{r}\leq ex(n,K_{r},H)\leq\left(\frac{1}{r^{r}}+\epsilon\right)n^{r}.
\end{align*}
The proof is completed.
\end{proof}

\section*{Acknowledgement}

This work is supported by the National Natural Science Foundation of China (No. 12071097, 12371344), the Natural Science Foundation for The Excellent Youth Scholars of the Heilongjiang Province (No. YQ2022A002) and the Fundamental Research Funds for the Central Universities.

\vspace{3mm}
\noindent

\end{CJK*}
\end{spacing}

\begin{thebibliography}{00}

\bibitem{Stone} P. Erd\H{o}s, A. M. Stone. On the structure of linear graphs. Bull. Amer. Math. Soc 52 (1946), 1087-1091.

\bibitem{Simonovits} P. Erd\H{o}s, M. Simonovits. A limit theorem in graph theory. Studia Sci. Math. Hungar. 1 (1966), 51-57.

\bibitem{Alon} N. Alon, C. Shikhelman. Many $T$ copies in $H$-free graphs. J. Combin. Theory Ser. B, 121 (2016) 146-172.

\bibitem{Luo} R. Luo. The maximum number of cliques in graphs without long cycles. J. Combin. Theory Ser. B, 128 (2018) 219-226.

\bibitem{Ergemlidze} B. Ergemlidze, A. Methuku, N. Salia, E. Gy\H{o}ri. A note on the maximum number of triangles in a $C_{5}$-free graph. J. Graph Theory. 24 (2018) 1-4.
    
\bibitem{Gerbner} D. Gerbner, E. Gy\H{o}ri, A. Methuku, M. Vizer. Generalized tur\'{a}n problems for even cycles. J. Combin. Theory Ser. B 145 (2019) 169-213.
    
\bibitem{Ma} J. Ma, Y. Qiu. Some sharp results on the generalized Tur\'{a}n numbers. European J. Combin. 84 (2020) 103026.

\bibitem{Nikiforov_sur} V. Nikiforov. Some new results in extremal graph theory: in surveys in Combinatorics. Lond. Math. Soc. Lect. Note Ser. 392 (2011) 141-181.
    
\bibitem{Li} Y. Li, W. Liu, L. Feng. A survey on spectral conditions for some extremal graph problems. Adv. Math. 51 (2022) 193-258.

\bibitem{Tait} M. Tait. The Colin de Verdi\`{e}re parameter, excluded minors, and the spectral radius. J. Comb. Theory, Ser. A 166 (2019) 42-58.

\bibitem{Zhai} M. Zhai, H. Lin. Spectral extrema of $K_{s,t}$-minor free graphs - On a conjecture of M. Tait. J. Combin. Theory Ser. B 157 (2022) 184-251.

\bibitem{Kang} J. Wang, L. Kang, Y. Xue. On a conjecture of spectral extremal problems. J. Combin. Theory Ser. B 159 (2023) 20-41.

\bibitem{Nikiforov} V. Nikiforov. A spectral Erd\H{o}s-Stone-Bollob\'{a}s theorem. Combin. Probab. Comput. 18 (2009) 455-458.

\bibitem{Book1} D. Cvetkovi\'{c}, M. Doob, and H. Sachs, Spectra of Graphs. Theory and Applications, Academic Press, New York, 1980.

\bibitem{Qi} L. Qi. Eigenvalues of a real supersymmetric tensor. Journal of symbolic computation, 40 (2005), 1302-1324.

\bibitem{Lim} L. Lim. Singular values and eigenvalues of tensors: a variational approach. IEEE International Workshop on Computational Advances in Multi-Sensor Adaptive Processing, 2005, 129-132.

\bibitem{Liu_2} C. Liu, J. Zhou, C. Bu. The high order spectrum of a graph and its applications in graph colouring and clique counting. Linear Multilinear Algebra 71 (2023), 2354-2365.

\bibitem{Liu} C. Liu, C. Bu. On a generalization of the spectral Mantel's theorem. J. Comb. Optim. 46 (2023), 14.  

\bibitem{Qi2013} L. Qi. Symmetric nonnegative tensors and copositive tensors. Linear Algebra Appl. 439 (2013) 228-238.

\bibitem{Hardy} G. Hardy, J. Littlewood, G. P\'{o}lya. Inequalities. Cambridge University Press, 1988.

\bibitem{Wilf} H. Wilf. Spectral bounds for the clique and independence numbers of graphs. J. Combin. Theory Ser. B 40 (1986) 113-117.

\bibitem{Erdos} P. Erd\H{o}s, P. Frankl, V. R\"{o}dl. The asymptotic number of graphs not containing a fixed subgraph and a problem for hypergraphs having no exponent. Graphs Combin. 2 (2) (1986) 113-121.


\end{thebibliography}
\end{document}